\newtheorem{thm}{Theorem}[section]
\newcommand{\bthm}{\begin{thm}} \newcommand{\ethm}{\end{thm}}
\newtheorem{prop}[thm]{Proposition}
\newcommand{\bprp}{\begin{prop}} \newcommand{\eprp}{\end{prop}}
\newtheorem{fact}[thm]{Fact}
\newcommand{\bfct}{\begin{fact}} \newcommand{\efct}{\end{fact}}
\newtheorem{prob}[thm]{Problem}
\newcommand{\bprb}{\begin{prob}} \newcommand{\eprb}{\end{prob}}
\newtheorem{quest}[thm]{Question}
\newcommand{\bqtn}{\begin{quest}} \newcommand{\eqtn}{\end{quest}}
\newtheorem{lem}[thm]{Lemma}
\newcommand{\blem}{\begin{lem}} \newcommand{\elem}{\end{lem}}
\newtheorem{claim}[thm]{Claim}
\newcommand{\bclm}{\begin{claim}} \newcommand{\eclm}{\end{claim}}
\newtheorem{cor}[thm]{Corollary}
\newcommand{\bcor}{\begin{cor}} \newcommand{\ecor}{\end{cor}}
\newtheorem{conj}[thm]{Conjecture}
\newcommand{\bcnj}{\begin{conj}} \newcommand{\ecnj}{\end{conj}}
\theoremstyle{definition}
\newtheorem{defn}[thm]{Definition}
\newcommand{\bdfn}{\begin{defn}} \newcommand{\edfn}{\end{defn}}
\newtheorem{spec}[thm]{Specializing}
\newcommand{\bspc}{\begin{spec}} \newcommand{\espc}{\end{spec}}
\theoremstyle{remark}
\newtheorem{rem}[thm]{Remark}
\newcommand{\brem}{\begin{rem}} \newcommand{\erem}{\end{rem}}
\newtheorem{cnv}[thm]{Convention}
\newcommand{\bcnv}{\begin{cnv}} \newcommand{\ecnv}{\end{cnv}}
\newtheorem{exam}[thm]{Example}
\newcommand{\bexm}{\begin{exam}} \newcommand{\eexm}{\end{exam}}
\newcommand{\bpf}{\begin{proof}} \newcommand{\epf}{\end{proof}}
\newtheorem{thmy}{\textbf{Theorem}}
\newenvironment{thmx}{\stepcounter{thm}\begin{thmy}}{\end{thmy}}
\newcommand{\Z}{\mathbb Z}
\newcommand{\N}{\mathbb N}
\renewcommand{\phi}{\varphi}
\renewcommand{\theta}{\vartheta}
\newcommand{\ga}{{\alpha}}
\newcommand{\gl}{{\lambda}}
\newcommand{\gb}{{\beta}}
\newcommand{\gga}{{\gamma}}
\newcommand{\gw}{{\omega}}
\newcommand{\gs}{{\sigma}}
\newcommand{\vv}{\mathbf{v}}
\newcommand{\supp}{{\rm supp}}
\newcommand{\nbd}{neighborhood \ }
\newcommand{\skp}{\smallskip}
\newcommand{\mkp}{\medskip}
\newcommand{\bkp}{\bigskip}
\begin{document}

\title[Homomorphic encoders of profinite abelian groups~I]{Homomorphic encoders of profinite abelian groups~I}

\author[M. Ferrer]{Mar\'ia V. Ferrer}
\address{Universitat Jaume I, Instituto de Matem\'aticas de Castell\'on,
Campus de Riu Sec, 12071 Castell\'{o}n, Spain.}
\email{mferrer@mat.uji.es}

\author[S. Hern\'andez]{Salvador Hern\'andez}
\address{Universitat Jaume I, INIT and Departamento de Matem\'{a}ticas,
Campus de Riu Sec, 12071 Castell\'{o}n, Spain.}
\email{hernande@mat.uji.es}

\thanks{ Research Partially supported by the Spanish Ministerio de Econom\'{i}a y Competitividad,
grant: MTM/PID2019-106529GB-I00 (AEI/FEDER, EU) and by the Universitat Jaume I, grant UJI-B2019-06}

\begin{abstract}
Let $\{G_i :i\in\N\}$ be a family of finite Abelian groups. We say that a subgroup  $G\leq \prod\limits_{i\in \N}G_i$ is
\emph{order controllable} if for every $i\in \mathbb{N}$ there is $n_i\in \mathbb{N}$ such that for each $c\in G$, there exists
$c_1\in G$ satisfying that $c_{1|[1,i]}=c_{|[1,i]}$, $supp (c_1)\subseteq [1,n_i]$, and order$(c_1)$
divides order$(c_{|[1,n_i]})$. In this paper we investigate the structure of order controllable subgroups. It is
proved that every order controllable, profinite, abelian group contains a subset
$\{g_n : n\in\N\}$ that topologically generates the group and whose elements $g_n$ all have finite support.
As a consequence, sufficient conditions are obtained that allow us to encode, by means of a topological group isomorphism,
order controllable profinite abelian groups. Some applications of these results to group codes will appear subsequently \cite{FH:2021}.
\end{abstract}

\thanks{{\em 2010 Mathematics Subject Classification.} Primary 20K25. Secondary 22C05, 20K45, 54H11, 68P30, 37B10\\
{\em Key Words and Phrases:} profinite abelian group, weakly controllable group, order controllable group, group code,
generating set, homomorphic encoder.
}


\date{24 October, 2019}

\maketitle \setlength{\baselineskip}{24pt}

\section {Introduction}
Let $\Z$ and $\N$ respectively denote the group of integers and the semigroup of natural numbers.
Suppose that $\Z$ is given the discrete topology and $\Z^\N$ the corresponding product
topology. Nunke proved in \cite{nunke} that every infinite, closed, subgroup $G$ of $\Z^\N$
is topologically isomorphic to a product of infinite cyclic groups, i.e., the group $G$ contains a subset
$\{g_n : n\in\N\}$ such that $G\cong \prod\limits_{n\in\N}{\langle g_n\rangle }$. Furthermore, it is not hard to
prove that the elements $g_n$ can be selected with finite support if and only if
$G\cap \Z^{(\N)}$ is dense in $G$ (here $\Z^{(\N)}$ denotes the direct sum, that is, the subgroup of the product
consisting of all elements with finite support). In this case, we say that $\{g_n : n\in\N\}$ is a \emph{generating set} that encodes the
group $G$. The main goal of this paper is to study the existence of generating sets, in a profinite abelian group $G$,
whose elements have finite support. 
We prove that for order controllable groups it is always possible to find a generating set whose elements all have finite support.
As a consequence, we can build some topological group isomorphisms that encode an order controllable closed subgroup of a product
of finite abelian group and describes how the subgroup is placed within the product of finite groups where it is located. 
Some applications of these results to group codes will appear subsequently \cite{FH:2021}.
\mkp

Let $\{G_i:i\in I\}$ be a family of topological groups. As usual, its {\em direct product\/}
$\prod_{i\in I} G_i$ is the set of all functions
$g: I\to \bigcup\{G_i:i\in I\}$ such that $g(i)\in G_i$ for every $i\in I$.
The group operation on $\prod_{i\in I} G_i$ is defined coordinate-wise: the product
$gh\in \prod_{i\in I} G_i$ of $g$ and $h$ in $\prod_{i\in I} G_i$ is the function defined by
$gh(i)=g(i)h(i)$ for each $i\in I$.
Clearly,
the identity element $e$ of $\prod_{i\in I} G_i$ is the function that assigns the identity element $e_i$ of $G_i$ to every $i\in I$.
We equip this product with the canonical product topology.
The subgroup
$$
\bigoplus_{i\in I} G_i=\{g\in \prod_{i\in I} G_i: g(i)=e_i
\mbox{ for all but finitely many } i\in I\}
$$
is called the {\em direct sum\/} of the family $\{G_i:i\in I\}$.
The \emph{support} of an element $x\in \prod_{i\in I} G_i$ is the set
$$\supp(x):=\{ i\in I : x_i\not= e_i\}.$$

Given a subgroup  $G\leq \prod\limits_{i\in I}G_i$ and a subset
$J\subseteq I$, we denote by $G_{J}:=\{c\in G :c(j)=e_j,j\notin J\}$ and $G_{|J}:=\pi_J(G)$,
where $\pi_J:\prod\limits_{i\in I}G_i\rightarrow \prod\limits_{i\in J}G_i$ is the the canonical projection. 
When $G_i=M$ for every $i\in I$, then we write $M^I$ instead of $\prod_{i\in I} G_i$.

If $S$ is a subset of a group $G$, then we denote by $\langle S\rangle$ the subgroup generated by $S$, that is, the smallest subgroup of $G$ containing every element of $S$.
However, the symbol $\langle g \rangle$ will denote the cyclic subgroup generated by $\{g\}$, $g\in G$.
Since most results here concern abelian groups, we will use additive notation from here on.
In particular, we will denote the identity element by $0$.

\medskip

The following two group-theoretic notions that have stem in coding theory.

\bdfn\label{def 1}
A subgroup  $G\leq \prod\limits_{i\in I}G_i$ is called \emph{weakly controllable} if
$G\cap \bigoplus\limits_{i\in I} G_i$ is dense in $G$, that is, if $G$ is generated by
its elements with finite support. The group $G$ is called \emph{weakly observable} if
$G\cap \bigoplus\limits_{i\in I} G_i=\overline G\cap \bigoplus\limits_{i\in I} G_i$,
where $\overline G$ stands for the closure of $G$ in $\prod\limits_{i\in I}G_i$ for
the product topology.
\edfn
\mkp

Although the notion of (weak) controllability was coined by Fagnani earlier
in a broader context (cf. \cite{fagnani:adv97,fagnani_zampieri:ieee96}), both notions
were introduced in the area of coding theory by Forney and Trott (cf. \cite{forney_trott:04}).
They observed that if
the groups $G_i$ are locally compact abelian, then controllability and observability are dual properties
with respect to the Pontryagin duality: If $G$ is a closed subgroup of $\prod\limits_{i\in I}G_i$,
then it is weakly controllable if and only if its annihilator
$G^\bot=\{\chi\in \widehat{\prod\limits_{i\in I}G_i} : \chi(G)=\{0\}\}$ is a weakly observable subgroup
of $\bigoplus\limits_{i\in I}\widehat{G_i}\leq \prod\limits_{i\in I}\widehat{G_i}$ (cf. \cite[4.8]{forney_trott:04}).

We now describe different ways in which a subgroup is placed in a product of topological
groups.

\bdfn\label{def 2}
Let $\{G_i\}_{i\in \N}$ be a family of compact groups and let $G$ be a closed subgroup of the product $\prod\limits_{i\in \N}G_i$.
The subgroup $G$ is called \emph{rectangular} if there is a subgroup $H_i\leq G_i$ for all $i\in \N$ such that
$G=\prod_{i\in \N}{H_i}$.
We say that $G$ is \emph{topologically generated} by the set $\{g_n : n\in \N\}$ if all elements $g_n,\ n\in\N$, have finite support
and the subgroup $\bigoplus\limits_{n\in\N}\, \langle g_n\rangle $ is dense in $G$.
If, in addition, the map
$$\Phi\colon \bigoplus\limits_{n\in\N}{\langle g_n\rangle }\to G$$
defined by $$\Phi((x_n)):=\sum\limits_{n\in\N}x_n,$$
with $x_n\in\langle g_n \rangle$ for all $n\in\N$, extends to a topological (onto) group isomorphism
$$\Phi\colon \prod\limits_{n\in\N}{\langle g_n\rangle }\to G$$
we say that $G$ is \emph{is weakly rectangular} and $\Phi$ is an \emph{isomorphic encoder} of $G$.
Finally, if $$\Phi(\bigoplus\limits_{n\in\N}\, \langle g_n\rangle )= G\cap\bigoplus\limits_{i\in \N} G_i,$$
we say that $G$ is an \emph{implicit direct product}.
\edfn

\medskip

\noindent The observations below are easily verified. (cf. \cite{GL:2012}).

\begin{enumerate}
\item{Weakly rectangular subgroups and rectangular subgroups of $\prod\limits_{i\in \N}G_i$ are weakly controllable.}
\item{If each $G_i$ is a pro-$p_i$-group for some prime $p_i$, and all $p_i$ are distinct, then every closed
subgroup of the product $\prod\limits_{i\in \N}G_i$ is rectangular, and thus is an implicit direct product.}
\item{If each $G_i$ is a finite simple non-abelian group, 
then every closed normal subgroup of the
product  $\prod\limits_{i\in \N}G_i$ is rectangular, and thus an implicit direct product.}
\end{enumerate}
\medskip

The main goal addressed in this paper is to investigate when a profinite abelian group is weakly rectangular or
an implicit direct product of finite groups. In particular we aim to know to what extent the converse of (1) above holds.
That is, we are interested in the following question (cf. \cite{GL:2012}):

\begin{prob}\label{Question}
Let $\{G_i : i\in \N \}$ be a family of finite abelian groups, and $G$
a closed subgroup of the product $\prod\limits_{i\in \N}G_i$. If $G$ is weakly controllable, that is
$G\cap \bigoplus\limits_{i\in \N} G_i$ is dense in $G$, what
can be said about the structure of $G$? More precisely, under what additional conditions on the group
$G$  there exists a generating set $\{y_j : j\in L \}$ for $G$?. In particular,
when is $G$ weakly rectangular or an implicit direct product? 

\end{prob}
\medskip

A first step in order to tackle this  question, was given in \cite{FHS:2017,kiehlmann}, where the following result was established.

\begin{thm}\label{DHS:products}
Let $I$ be a countable set, $\{G_i:i\in \N\}$ be a family of finite abelian groups
and $\prod_{i\in \N} G_i$ be its direct product.
If $G$ is a closed weakly controllable subgroup of $\prod_{i\in \N} G_i$, then
$G$ is topologically isomorphic to a direct product of finite cyclic groups.
\end{thm}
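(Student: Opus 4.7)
The plan is to pass to the Pontryagin dual and reduce the conclusion to a classical structure theorem of Pr\"ufer for countable abelian $p$-groups. Since $\prod_{i\in\N} G_i$ is a compact metrizable abelian group, $G$ is itself profinite abelian, and its Pontryagin dual $\widehat G$ is a countable discrete torsion abelian group. Under the duality $\widehat{\prod_{i\in\N} G_i}=\bigoplus_{i\in\N}\widehat{G_i}$, one may realize $\widehat G$ as the directed union $\widehat G=\bigcup_{n\in\N}\widehat{A_n}$, where $A_n:=\pi_n(G)$ is finite and $\widehat{A_n}$ is identified with the subgroup of characters of $G$ that are trivial on $V_n:=G\cap\ker\pi_n$.

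Pontryagin duality is a contravariant equivalence exchanging direct products and direct sums, so it will suffice to show that $\widehat G$ is isomorphic, as a discrete group, to a direct sum of finite cyclic groups. I would decompose $\widehat G$ into its primary components $\widehat G=\bigoplus_p(\widehat G)_p$ and treat each countable abelian $p$-group separately. By a classical theorem of Pr\"ufer, a countable abelian $p$-group is a direct sum of cyclic groups precisely when it contains no nonzero element of infinite $p$-height. Thus the heart of the argument is to prove that, for every prime $p$, no nonzero character $\chi\in\widehat G$ of $p$-power order belongs to $\bigcap_{j\ge 0}p^j\widehat G$.

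To rule this out, I would argue by contradiction: assume $\chi$ has $p$-power order and infinite $p$-height. Weak controllability of $G$ means $G\cap\bigoplus_{i\in\N}G_i$ is dense in $G$, so its annihilator in $\widehat G$ is trivial, and therefore $\chi(g)\neq 0$ for some $g\in G$ with $\supp(g)\subseteq[1,n]$. Writing $\operatorname{ord}(g)=p^a q$ with $\gcd(p,q)=1$, the fact that $\chi$ has $p$-power order forces $\chi(qg)=q\chi(g)\neq 0$ (since multiplication by $q$ is invertible on the cyclic subgroup of $\T$ generated by $\chi(g)$), while $qg$ has order $p^a$. Replacing $g$ by $qg$, one may assume $p^a g=0$ and $\chi(g)\neq 0$.

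Now pick $N\ge n$ with $\chi\in\widehat{A_N}$. Since $\chi$ has $p$-height at least $a$ in the directed union $\widehat G=\bigcup_M\widehat{A_M}$, after enlarging $N$ one can write $\chi=p^a\psi$ in $\widehat{A_N}$ for some $\psi\in\widehat{A_N}$. But then $\chi(g)=\psi(p^a g)=\psi(0)=0$, a contradiction. Hence $\widehat G$ contains no nonzero element of infinite $p$-height for any prime $p$, so $\widehat G$ is a direct sum of finite cyclic groups; dualising back, $G$ is topologically isomorphic to a direct product of finite cyclic groups. The main obstacle will be the final two steps: leveraging weak controllability to witness nontriviality of $\chi$ by a single finitely supported element whose order is made a $p$-power by multiplying out the prime-to-$p$ part, and then pinning down the height hypothesis inside a single finite abelian group via the identification $p^a\widehat{A_N}=\{\chi\in\widehat{A_N}:\chi|_{A_N[p^a]}=0\}$.
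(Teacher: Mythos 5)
Your argument is correct, but note that the paper does not actually prove Theorem~\ref{DHS:products}: it imports it from \cite{FHS:2017,kiehlmann} and explicitly remarks that the known proof gives no clue about the existence of a generating set for $G$ --- precisely because that proof, like yours, passes to the Pontryagin dual. Your route (dualize to the countable discrete torsion group $\widehat G=\bigcup_n\widehat{A_n}$ with $A_n=\pi_{[1,n]}(G)$; use density of $G\cap\bigoplus_{i}G_i$ to produce a finitely supported $g$ with $\chi(g)\neq 0$; reduce to $p^ag=0$ by multiplying away the prime-to-$p$ part of the order of $g$; contradict $p$-height $\geq a$ via $\chi(g)=(p^a\psi)(g)=\psi(p^ag)=0$; apply Pr\"ufer's first theorem to each primary component and dualize back, exchanging $\bigoplus$ with $\prod$) is sound, and each step checks out, including the passage from height in $(\widehat G)_p$ to height in $\widehat G$ and the triviality of the annihilator of a dense subgroup. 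The contrast with the machinery the paper does develop is instructive: Theorem~\ref{th_products_p} assumes the stronger hypothesis of order controllability exactly so that heights can be controlled inside $G$ itself and generators with prescribed finite supports can be extracted; your dual-side height argument buys the isomorphism type from mere weak controllability, but, as the authors emphasize, it says nothing about how the cyclic factors sit inside $\prod_{i}G_i$, which is the point of the rest of the paper.
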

\medskip

We notice that this result does not answer Problem \ref{Question}, since its actual proof gives no clue about the existence
of generating set for $G$.
Incidently, the continuity of mappings defined on weak direct sums has been investigated in \cite{DikShaSpe:16,Spe:18}.
However, those results go in a different direction and Question \ref{Question} is not addressed there.

\brem
The relevance of these notions stem from coding theory where they appear
in connection with the study of (convolutional) group codes \cite{forney_trott:04,rosenthal}.
However, similar concepts had been studied in symbolic dynamics previously. Thus, the notions
of weak controllability and weak observability are related to the concepts of \emph{irreducible shift} and
\emph{shift of finite type}, respectively, that appear in symbolic dynamics.
Here, we are concerned with abelian profinite groups and
our main interest is to clarify the overall topological and algebraic structure of abelian profinite groups
that satisfy any of the properties introduced above.
In the last section, we shall also highlight some connections with the study of group codes.
\erem
\mkp

We now formulate our main result. Here, for every group $G$, we denote by $(G)_p$ the largest $p$-subgroup of $G$ and
$\mathbb{P}_G=\{p\in\mathbb{P}\,:\, G\hbox{ contains a }p-\hbox{subgroup}\}$
where 
$\mathbb{P}$ is the set of all prime numbers.

\begin{thmx}\label{theorem_A}
\emph{Let $G$ be an order controllable, closed, subgroup of a countable product $\prod\limits_{i\in\mathbb{N}}G_i$ of finite abelian groups $G_i$.
Then the following assertions hold true:
\begin{enumerate}[(a)]
\item There is a generating set $\{y_{m}^{(p)}\,:\,m\in\mathbb{N},p\in\mathbb{P}_G\}\subseteq G\cap(\bigoplus\limits_{i\in\mathbb{N}}G_i)$
for $G$ such that $\{y_m^{(p)}\,:\,m\in\mathbb{N}\}\subseteq(G\cap(\bigoplus\limits_{i\in\mathbb{N}}G_i))_p$ for all prime $p$.
\item If $G$ has finite exponent, then there is an isomorphic encoder
$$\Phi\colon \prod\limits_{\begin{array}{l}_{m\in\mathbb{N}}\\^{p\in\mathbb{P}_G}\end{array}}\langle y_m^{(p)}\rangle\to G$$
and, as a consequence, $G$ is weakly rectagular.
\item If $\bigoplus\limits_{m\in\mathbb{N}} \langle y_m^{(p)}\rangle[p]$ is weakly observable for each prime $p$, 
then $G$ is an implicit direct product.
\end{enumerate}
}
\end{thmx}

\section{Basic definitions and terminology}

In accordance with the general terminology, a group $G$ is called {\em torsion or periodic} if
the orders of all its elements are finite, {\em torsion-free} if all elements, except the identity,
have infinite order. If there is a natural number $n$ such that $ng=0$ for all $g\in G$, 
we say that $G$ has {\it finite exponent}.  Then the smallest such $n$ is called the
{\it exponent} of $G$, denoted as exp$(G)$.
An abelian torsion group $G$ in which the order of every element is a power of
a prime number $p$ is called {\em $p$-group}.
An element $g$ of a $p$-group $G$ is said to have \emph{ finite height $h:=h(g,G)$ in $G$}
if this is the largest natural number $n$ such that the equation $p^nx = g$ has a solution $x\in G$.
We say that $g$ has \emph{infinite height} if  the solution exists  for all $n\in \mathbb{N}$.
Here on, the symbol $G[p]$ denotes the subgroup consisting of all
elements of order $p$. It is well known that $G[p]$ is a vector space on the field $\mathbb{Z}(p)$.


\begin{defn}\label{def_2}  Let $\{G_i : i\in \mathbb{N} \}$ be a family of topological groups and $G$ a subgroup of $\prod\limits_{i\in \mathbb{N}} G_i$. We have the
following notions:
\begin{enumerate}[(1)]

\item $G$ is \emph{controllable} if for every 
$i\in \mathbb{N}$ there is $n_i\in \mathbb{N}$ such that for each $c\in G$, there exists
$c_1\in G$ such that $c_{1|[1,i]}=c_{|[1,i]}$ and $c_{1|]n_i,+\infty[}=0$
(we assume that $n_i$ is the least natural number satisfying this property).
Remark that this property implies the existence of $c_2:=c-c_1\in G$ such that $c=c_1+c_2$,
$supp (c_1)\subseteq [1,n_i]$ and $supp(c_2)\subseteq [i+1,+\infty[$.
The sequence $(n_i)_{i\in\N}$ is called \emph{controllability sequence of $G$}.

\item $G$ is \emph{order controllable} if for every 
$i\in \mathbb{N}$ there is $n_i\in \mathbb{N}$ such that for each $c\in G$, there exists
$c_1\in G$ such that $c_{1|[1,i]}=c_{|[1,i]}$, $supp (c_1)\subseteq [1,n_i]$, and order$(c_1)$
divides order$(c_{|[1,n_i]})$
(again, we assume that $n_i$ is the least natural number satisfying this property).
This property implies the existence of $c_2\in G$ such that $c=c_1+c_2$,
$supp(c_2)\subseteq [i+1,+\infty[$, and order$(c_2)$ divides order$(c)$.
Here, the order of $c$ is taken in the usual sense, considering $c$ as an element of the group $G$.
The sequence $(n_i)_{i\in\N}$ is called \emph{order controllability sequence of $G$}.
\end{enumerate}
\end{defn}

\brem
\begin{enumerate}[(i)]
\item Every controllable group is weakly controllable and, if the groups $G_i$ are finite,
then the notions of controllability and weakly controllability are equivalent
(see \cite[Corollary 2.3]{FHS:2017}, where the term uniformly controllable subgroup is used instead of controllable subgroup
that we have adopted here).

\item If $\{G_i : i\in \mathbb{N} \}$ is a family of  finite, abelian, groups and
$G$ is an infinite subgroup of  $\prod\limits_{i\in \mathbb{N}} G_i$ that contains an order controllable
dense subgroup $H$, then $G$ is order controllable as well. (To see this, take an arbitrary element
$z\in G$ and let $[1,m]$ be an arbitrary finite block.
By the density of $H$ in $G$, there is an element $h\in H$ such that $\pi_{[1,n_m]}(z)=\pi_{[1,n_m]}(h)$, where $(n_i)$ denotes the
order controllability sequence of $H$. Now, applying that $H$ is order controllable,
there is $h_1\in H$ such that $\pi_{[1,m]}(h_1)=\pi_{[1,m]}(h)=\pi_{[1,m]}(z)$, $\hbox{supp} (h_1)\subseteq [1,n_m]$
and order$(h_1)$ divides order$(h_{|[1,n_m]})=$order$(z_{|[1,n_m]}$)).
\end{enumerate}
\erem

\section{Profinite abelian $p$-groups}

In this section, we describe the structure of profinite abelian $p$-groups.

\begin{lem}\label{lem altura}
Let $\{G_i : i\in \mathbb{N} \}$ be a family of  finite, abelian, $p$-groups and let $G$ be an infinite subgroup of
$\prod\limits_{i\in \mathbb{N}} G_i$ which is order controllable. If $x\in G_{[1,n_i]}[p]$ and $\pi_{[1,i]}(x)\neq 0$,
where $(n_i)_{i\in\N}$ is the order controllability sequence of $G$,
then there exists $\widetilde{x}\in G_{[1,n_i]}[p]$ such that $\pi_{[1,i]}(\widetilde{x})=\pi_{[1,i]}(x)$
and $h(x,G)=h(\widetilde{x},G_{[1,n_i]})$. In the particular case that $\pi_{[1,i-1]}(x)=0$ and there is $j$ such that $n_j<i$
we can take $\widetilde{x}$ such that $h(\widetilde{x},G_{[j+1,n_i]})=h(x,G)=h(x,G_{[j+1,+\infty[})$.
In either case, we take $\widetilde{x}$ with the maximum possible height among those elements satisfying these properties.
\end{lem}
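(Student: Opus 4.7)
The plan is to apply the order controllability hypothesis to a $p^k$-th root of $x$ in $G$, and for the refined ``particular case'' to pre-process this root by an additional application of order controllability at level $j$. Throughout I would set $k:=h(x,G)$ and fix $y\in G$ with $p^ky=x$; since $x$ has order $p$ (it is a nonzero element of $G[p]$ because $\pi_{[1,i]}(x)\neq 0$), one has $p^{k+1}y=0$, and in particular both $y$ and $y_{|[1,n_i]}$ have order dividing $p^{k+1}$.

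For the general statement I would apply the order controllability of $G$ to $y$ at level $i$, obtaining $y_1\in G$ with $\pi_{[1,i]}(y_1)=\pi_{[1,i]}(y)$, $\supp(y_1)\subseteq[1,n_i]$, and order$(y_1)$ dividing order$(y_{|[1,n_i]})$, hence dividing $p^{k+1}$. Setting $\widetilde{x}:=p^ky_1$, a direct check gives $\widetilde{x}\in G_{[1,n_i]}[p]$ with $\pi_{[1,i]}(\widetilde{x})=p^k\pi_{[1,i]}(y)=\pi_{[1,i]}(x)$, and $y_1\in G_{[1,n_i]}$ witnesses $h(\widetilde{x},G_{[1,n_i]})\geq k$. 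Since $G_{[1,n_i]}$ is a subgroup of the finite product $\prod_{l=1}^{n_i}G_l$, the set of candidates $\widetilde{x}$ satisfying the first two conditions is finite and admits a candidate of maximal height $M$; the construction gives $M\geq k$. For the reverse inequality I would pick $z\in G_{[1,n_i]}$ with $p^Mz=\widetilde{x}$, observe that $p^Mz-x$ lies in $G[p]$ with support in $[i+1,n_i]$ and zero $\pi_{[1,i]}$-projection, and try to extract via a further iteration of order controllability an element $u\in G$ with $p^Mu=p^Mz-x$; this would yield $x=p^M(z-u)$, whence $h(x,G)\geq M$ and thus $M=k$. This last transfer step is the main obstacle I foresee, and I expect both the hypothesis $\pi_{[1,i]}(x)\neq 0$ and the maximality of $\widetilde{x}$ to be essential there, since without them one could not rule out the existence of spurious high-height candidates with no lift to a witness for $x$ itself.

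For the refined statement I would first notice that the $\widetilde{x}$ built above satisfies $\supp(\widetilde{x})\subseteq[i,n_i]\subseteq[n_j+1,n_i]\subseteq[j+1,n_i]$ automatically, because $\pi_{[1,i-1]}(\widetilde{x})=\pi_{[1,i-1]}(x)=0$ and $n_j<i$. To obtain $h(x,G)=h(x,G_{[j+1,+\infty[})$, I would apply order controllability at level $j$ to $y$, producing $y_2\in G$ with $\pi_{[1,j]}(y_2)=\pi_{[1,j]}(y)$, $\supp(y_2)\subseteq[1,n_j]\subseteq[1,i-1]$ and order$(y_2)$ dividing order$(y_{|[1,n_j]})$; since $p^ky_{|[1,n_j]}=x_{|[1,n_j]}=0$, this order divides $p^k$, so $p^ky_2=0$ and $y':=y-y_2\in G_{[j+1,+\infty[}$ still satisfies $p^ky'=x$. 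Rerunning the main construction with $y'$ in place of $y$ then gives $y_1'\in G$ with $\pi_{[1,j]}(y_1')=\pi_{[1,j]}(y')=0$, hence $\supp(y_1')\subseteq[j+1,n_i]$, and $\widetilde{x}=p^ky_1'\in G_{[j+1,n_i]}$ satisfies $h(\widetilde{x},G_{[j+1,n_i]})\geq h(x,G_{[j+1,+\infty[})=h(x,G)$; the maximality argument of the previous paragraph, now run inside $G_{[j+1,n_i]}$, then upgrades this inequality to the desired equality.
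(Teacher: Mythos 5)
Your construction is the paper's own, almost verbatim: both take a root $y\in G$ with $p^{h}y=x$ for $h=h(x,G)$ maximal, apply order controllability to $y$ at level $i$ to obtain $\widetilde{y}\in G_{[1,n_i]}$ with $\pi_{[1,i]}(\widetilde{y})=\pi_{[1,i]}(y)$ and $\mathrm{order}(\widetilde{y})$ dividing $p^{h+1}$, and set $\widetilde{x}:=p^{h}\widetilde{y}$, which gives $\widetilde{x}\in G_{[1,n_i]}[p]$, $\pi_{[1,i]}(\widetilde{x})=\pi_{[1,i]}(x)\neq 0$ and $h(\widetilde{x},G_{[1,n_i]})\ge h(x,G)$. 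Your handling of the particular case is a harmless variant: you pre-process $y$ by subtracting a $y_2$ supported in $[1,n_j]$ with $p^{h}y_2=0$, whereas the paper decomposes $\widetilde{y}=w_1+w_2$ afterwards and kills $p^{h}w_1$; both land $\widetilde{x}=p^{h}w_2$ in $G_{[j+1,n_i]}$, and your explicit derivation of $h(x,G)=h(x,G_{[j+1,+\infty[})$ via $y'=y-y_2\in G_{[j+1,+\infty[}$ with $p^{h}y'=x$ is in fact more detailed than the paper's ``the same argument shows.''

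The step you flag as an obstacle --- the reverse inequality $h(\widetilde{x},G_{[1,n_i]})\le h(x,G)$ --- is a genuine sticking point, and the repair you sketch would not work: order controllability gives you no way to manufacture $u\in G$ with $p^{M}u=p^{M}z-x$ merely from the fact that $p^{M}z-x$ is a $p$-torsion element supported in $[i+1,n_i]$, since the hypothesis controls supports and orders of decompositions, not heights. You should be aware, though, that the paper's proof does not supply this direction either: it simply asserts $h(\widetilde{x},G_{[1,n_i]})=h(x,G)$ after the construction. What is actually invoked later (in Theorem \ref{th_products_p}) is only the inequality you do prove together with the lemma's closing clause that $\widetilde{x}$ is chosen of maximal height among the (finitely many, as you note) admissible candidates. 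So your proposal establishes everything the paper's argument establishes and everything that is used downstream; the unproved $\le$ direction is a gap relative to the literal wording of the lemma, but it is one your writeup shares with, and honestly discloses where, the original is silent.
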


\begin{proof}
Take an element $x\in G_{[1,n_i]}[p]$ with $\pi_{[1,i]}(x)\neq 0$.
Since every group $G_i$ in the product is finite and $x$ has finite support, it follows that $x$ has finite height.
Pick an arbitrary element $y\in G$ such that $x=p^{h}y$ (where $h=h(x,G)$ is the maximal height), which implies
that order$(y)=p^{h+1}$. Since $G$ is order controllable, $y=\widetilde{y}+w$ where $\widetilde{y}\in G_{[1,n_i]}$,
order$(\widetilde{y})= p^{h+1}$, $w\in G_{[i+1,+\infty[}$,  order$(w)\leq p^{h+1}$ and $p^hw(j)=0$ for all $j>n_i$.
Observe that $p^hw\in G_{[i+1,n_i]}[p]$, $\widetilde{x}:=p^h\widetilde{y}\in G_{[1,n_i]}[p]$,
$0\neq \pi_{[1,i]}(x)=\pi_{[1,i]}(\widetilde{x})$, and $h(\widetilde{x},G_{[1,n_i]})=h(x,G)$.

Suppose now that  $\pi_{[1,i-1]}(x)=0$ and there is $j$ such that $n_j<i$. Then $\pi_{[1,n_j]}(x)=\pi_{[1,n_j]}(\widetilde{x})=0$ and
order$(\widetilde{y}_{|[1,n_j]})\leq p^h$. Moreover, $\widetilde{y}=w_1+w_2$, $w_1\in G_{[1,n_j]}$, $w_2\in G_{[j+1,n_i]}$ and
order$(w_1)\leq$ order$(\widetilde{y}_{|[1,n_j]})\leq p^h$. Then $0\neq \widetilde{x}=p^h(w_1+w_2)=p^hw_1+p^hw_2=p^hw_2$,
$\pi_{[1,j]}(w_2)=0$ and order$(w_2)=p^{h+1}$. As a consequence, $h(x,G)=h(\widetilde{x},G_{[1,n_i]})=h(\widetilde{x},G_{[j+1,n_i]})$.
The same argument shows that $h(x,G)=h(x,G_{[j+1,+\infty[})$.
\end{proof}
\mkp

Next follows the main result of this section. It provides sufficient conditions for a subgroup $G$ to be weakly rectangular or an implicit direct product.

\begin{thm}\label{th_products_p}
Let $\{G_i : i\in \mathbb{N} \}$ be a family of  finite, abelian, $p$-groups.  
If $G$ is an (infinite) order controllable, closed, subgroup of  $\prod\limits_{i\in \mathbb{N}} G_i$ then the following
assertions hold true:
\begin{enumerate}[(i)]
\item There is a generating set $\{y_m: m\in \mathbb{N} \}\subseteq G\cap\bigoplus\limits_{i\in \mathbb{N}} G_i$ for $G$.
\item If $G$ has finite exponent, then there is an isomorphic encoder $$\Phi\colon \prod\limits_{m\in \mathbb{N}} \langle y_m\rangle\to G.$$
As a consequence $G$ is weakly rectangular.
\item Let $p^{h_m+1}$ be the order of $y_m$. If the group
$\sum\limits_{m\in \mathbb{N}} \langle p^{h_m}y_m\rangle$ is weakly observable,
then $G$ is an implicit direct product.
\end{enumerate}
\end{thm}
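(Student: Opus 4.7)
The plan is to prove (i)--(iii) in sequence, with Lemma \ref{lem altura} as the main engine for (i) and standard direct-sum/observability techniques handling (ii) and (iii) once the generating set is in place.

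For (i), I would construct $\{y_m\}$ inductively so that the ``tops'' $p^{h_m}y_m\in G[p]$ form a $\mathbb{Z}(p)$-independent family whose $\pi_{[1,i]}$-projections span $\pi_{[1,i]}(G[p])$ for every $i$. Concretely, at stage $i$ let $V_i:=\pi_{[1,i]}(G[p])$ and extend the previously chosen tops by finitely many new elements whose projections complete a basis of $V_i$. Each new top is produced by taking any $x\in G_{[1,n_i]}[p]$ with $\pi_{[1,i]}(x)$ outside the already-covered part of $V_i$ and applying Lemma \ref{lem altura} to obtain $\widetilde x\in G_{[1,n_i]}[p]$ of the same projection and maximal height $h=h(x,G)$. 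Writing $\widetilde x=p^h y$ with $y\in G$, one more use of order controllability gives a finitely supported $y_m$ of order $p^{h+1}$ with $p^{h_m}y_m=\widetilde x$. Once all stages are complete, the tops project onto a spanning set of $\pi_{[1,i]}(G[p])$ for every $i$, so their closed span equals $G[p]$; a height-induction through the filtration $G\supseteq pG\supseteq p^2G\supseteq\cdots$ then shows $\bigoplus_m\langle y_m\rangle$ is dense in $G$.

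For (ii), the finite-exponent hypothesis bounds all orders by $p^N$ where $N=\exp(G)$, so $\Phi$ extends continuously and coordinate-wise to the compact product $\prod_m\langle y_m\rangle$; its image is a closed subgroup containing a dense subgroup, hence equals $G$. Injectivity is a peeling-off argument: if $\sum a_m y_m=0$, multiply by $p^{N-1}$ to reduce to an equation among the tops, use their $\mathbb{Z}(p)$-independence to kill the highest-order coefficients, and iterate on lower heights. Continuity of $\Phi^{-1}$ is automatic since $G$ is compact Hausdorff. For (iii), given $g\in G\cap\bigoplus_i G_i$, write $\Phi^{-1}(g)=(a_m y_m)$ and expand each $a_m$ in base $p$ as $a_m=\sum_{j=0}^{h_m}c_{m,j}p^j$. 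The top-level contribution $\sum_m c_{m,h_m}p^{h_m}y_m$ is a limit of $\mathbb{Z}(p)$-combinations of tops lying in $G[p]\cap\bigoplus_i G_i$, so the weak observability of $\sum_m\langle p^{h_m}y_m\rangle$ forces only finitely many $c_{m,h_m}$ to be nonzero. Subtracting this finite combination from $g$ produces a new element of $pG\cap\bigoplus_i G_i$, and iterating strictly decreases the relevant heights, so the procedure terminates with finite support.

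The main obstacle I expect is the bookkeeping in (i): arranging that the tops chosen at successive stages remain independent in the full socle $G[p]$, not merely in each finite projection $V_i$. The ``maximum possible height'' clause in Lemma \ref{lem altura} is precisely what ensures that the greedily extended bases of the $V_i$'s lift to a coherent $\mathbb{Z}(p)$-basis of $G[p]$; without it, the lifts $\widetilde x$ could acquire hidden dependencies on earlier choices through their tails in $[n_j+1,n_i]$. A secondary subtlety appears in (iii), where weak observability supplies the key finiteness statement at each height level, and its failure is exactly what prevents (iii) from holding under the hypotheses of (i) and (ii) alone.
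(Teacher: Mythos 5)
Your overall strategy coincides with the paper's (build a socle basis of maximal-height tops via Lemma \ref{lem altura}, lift to the $y_m$, prove density by induction on the order, use compactness for (ii) and weak observability for (iii)), but two steps as you describe them do not go through. First, in (i) you only require each new generator to lie in $G_{[1,n_i]}[p]$; nothing in your construction forces the generators chosen at later stages to vanish on the earlier coordinates. Consequently, for a fixed index $i$ infinitely many $y_m$ may be nonzero at $i$, and then $\sum_m k_m y_m$ need not converge for $(k_m)$ in the full product $\prod_m\langle y_m\rangle$: the map $\Phi$ does not extend ``coordinate-wise'' to the product, which is exactly what (ii) needs (finite exponent alone does not help), and the element $v=\lim_k\sum_{m\le m(k)}\alpha_mp^{h_m-s}y_m$ used in your height/order induction for density need not exist either. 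The paper handles this by choosing the stage-$k$ tops inside $G_{[d_{k-1}+1,\,n_{d_k}]}[p]$ and invoking the second clause of Lemma \ref{lem altura} to get $y_j(i)=0$ for all $j>m(n_i)$. Relatedly, you assign the ``maximum possible height'' clause the job of keeping the tops independent; independence actually follows from the block-basis property of the projections, while maximality of height is what yields the divisibility $s\le h_m$ in the induction on order --- and justifying that requires the separate (and lengthy) argument that every element of $\sum_m\langle x_m\rangle$ has the same height in $G$ as in $\sum_m\langle y_m\rangle$, which your sketch does not address.

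Second, in (iii) you apply weak observability to the ``top-level contribution'' $\sum_m c_{m,h_m}p^{h_m}y_m$. This is an infinite sum lying in $\overline{\sum_m\langle x_m\rangle}$, but there is no reason for it to lie in $\bigoplus_i G_i$ even when $g$ does (the lower base-$p$ digits of the $a_m$ can cancel its tail), so weak observability --- a statement only about $\overline{\sum_m\langle x_m\rangle}\cap\bigoplus_i G_i$ --- says nothing about it; moreover $g$ minus this top-level piece need not lie in $pG$, since the digit-zero terms $c_{m,0}y_m$ survive. The paper's route avoids both problems: take $z\in G\cap\bigoplus_iG_i$ of minimal order $p^{s+1}$ outside $\sum_m\langle y_m\rangle$; then $p^sz$ is automatically finitely supported, weak observability gives $p^sz=\sum_{m\in J}\alpha_mx_m$ with $J$ finite, the inequality $s\le h_m$ lets one write $p^sz=p^sv$ with $v=\sum_{m\in J}\alpha_mp^{h_m-s}y_m$ a finite combination, and $z-v$ has strictly smaller order, contradicting minimality. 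The correct induction is on the order of the element, not on heights.
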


\begin{proof}
The proof relies on the existence of two increasing sequences of na\-tu\-ral numbers
$(d_k)_{k\geq 1}$ and $(m(k))_{k\geq 0}$,
where $m(0)=0$,
and a sequence of finite subsets $B_k:=\{x_{m(k-1)+1}, \cdots , x_{m(k)}\}\subseteq G[p]\bigcap\bigoplus\limits_{i\in \N}G_i$ satisfying the following conditions:

\begin{enumerate}[(a)]
\item $\pi_{[d_{k-1}+1,{d_k}]}(B_k)$ consists of linearly independent \emph{vectors} in $\pi_{[d_{k-1}+1,{d_k}]}(G[p])$;
\item $\pi_{[d_{k-1}+1,{d_k}]}(B_1\cup\cdots B_k)$ is a generating set of $\pi_{[d_{k-1}+1,{d_k}]}(G[p])$;
\item $\pi_{[1,d_k]}(B_1\cup\cdots B_k)$ forms a basis of $\pi_{[1,d_k]}(G[p])$;
\item if $m(k-1)+1\leq j\leq m(k)$, then $x_j\in G_{[d_{k-1}+1,n_{d_k}]}[p]\setminus\langle x_1,\cdots x_{j-1} \rangle$
and $x_j$ has maximal height $h_j$ in $G$; 
\item for each $x_j\in B_k$ there is 
an element $y_j\in G_{[1,n_{d_k}]}$
such that $x_j=p^{h_j}y_j$. Furthermore $y_j(i)=0$ for all $j>m(n_i)$;
\item $G[p]=\langle B_1\rangle\bigoplus\cdots\langle B_k\rangle\bigoplus G_{[d_k+1,+\infty[}[p]$
(here, with some notational abuse, we mean {\em vector space direct sum}).

\end{enumerate}
\medskip

Remark that (f) yields
\begin{equation}\label{equ1}
\pi_{[1,d_k]}(G[p])=\pi_{[1,d_k]}(\langle B_1\rangle\bigoplus\cdots\langle B_k\rangle)\ (\forall k\in\N).
\end{equation}
As a consequence, we obtain
$$G[p]\subseteq \overline{\bigoplus\limits_{k=1}^\infty \langle B_k\rangle}\cong\overline{\bigoplus\limits_{m=1}^\infty \langle x_m\rangle}.$$

\medskip

We proceed by induction in order to prove the existence of the sequences $(d_k)_{k\in\N}$, $(m(k))_{k\in\N}$, and $B_k:=\{x_{m(k-1)+1}, \cdots , x_{m(k)}\}$.

Since $G$ is order controllable, there is an order controllability sequence $(n_i)_{i\geq 1}\subseteq \N$ such that
 $\pi_{[1,i]}(G)=\pi_{[1,i]}(G_{[1,n_i]})$ for all $i\in\mathbb{N}$. We have further

$G=G_{[1,n_1]}+G_{[2,+\infty[}=G_{[1,n_1]}+\cdots G_{[i,n_i]}+ G_{[i+1,+\infty[}$,

$G[p]= G_{[1,n_1]}[p]+ G_{[2,+\infty[}[p]= G_{[1,n_1]}[p]+\cdots G_{[i,n_i]}[p]+ G_{[i+1,+\infty[}[p]$.

Remark that, since every group in the
product $G_i$ is finite, all the elements in $(G\bigcap\bigoplus\limits_{i\in \N}G_i)[p]$ have finite height.
\medskip

Let $d_1\in\N$ be the minimum element such that
$$m(1):=\dim\pi_{[1,{d_1}]}(G[p])= \dim\pi_{[1,{d_1}]}(G_{[1,n_{d_1}]}[p])\neq 0.$$
We select an element $x_1\in G_{[1,n_{d_1}]}[p]$ such that $\pi_{[1,{d_1}]}(x_1)\neq\{0\}$ and has maximal height
$h_1:=h(x_1,G)=h(x_1,G_{[1,n_{d_1}]})$, by Lemma \ref{lem altura}. If $\dim  \pi_{[1,{d_1}]}(G_{[1,n_{d_1}]}[p])\neq 1$,
we repeat the same argument in order to obtain an element $x_2\in G_{[1,n_{d_1}]}[p]$ satisfying:
(i) $\pi_{[1,{d_1}]}(x_2)\notin \langle \pi_{[1,{d_1}]}(x_1)\rangle$; and
(ii) $h_1\geq h_2:=h(x_2,G)=h(x_2,G_{[1,n_{d_1}]})$. Furthermore, we select $x_2$ in such a way that has maximal height
among the elements in $G_{[1,n_{d_1}]}$ satisfying (i) and (ii). We go on with this procedure obtaining
a finite subset $B_1=\{x_1, x_2,\cdots x_{m(1)}\}$ such that $\pi_{[1,{d_1}]}(B_1)$ is a basis of
$\pi_{[1,{d_1}]}(G[p])$ and $h_1\geq h_2\geq \cdots\geq h_{m(1)}$, where $h_j=h(x_j, G)=h(x_j, G_{[1,n_{d_1}]})$ is the maximal possible height, $1\leq j\leq m(1)$.
Moreover, associated to every $x_j\in B_1$ there is $y_j\in G_{[1,n_{d_1}]}$
such that $x_j=p^{h_j}y_j$.
Thus the properties (a),\dots, (e) stated above are satisfied for $n=1$.

We now verify (f), that is $$G[p]=\langle B_1\rangle\bigoplus G_{[d_1+1,+\infty[}[p].$$

Indeed, let $0\not=c\in G[p]$. If
 $\pi_{[1,{d_1}]}(c)=0$ then $c\notin \langle B_1\rangle$ since, other\-wise,
we would have $$c=\lambda_1x_1+\cdots+\lambda_{m(1)}x_{m(1)}$$
and $$0=\pi_{[1,{d_1}]}(c)=\lambda_1\pi_{[1,{d_1}]}(x_1)+\cdots~\lambda_{m(1)}\pi_{[1,{d_1}]}(x_{m(1)}),$$
which yields $\lambda_1=\cdots =\lambda_{m(1)}=0$ because  $\pi_{[1,{d_1}]}(B_1)$ is an independent set.

On the other hand, if $\pi_{[1,{d_1}]}(c)\neq0$, then
 $\pi_{[1,{d_1}]}(c)=\pi_{[1,{d_1}]}(b)$ for some $b\in\langle B_1\rangle$. Hence $c=b+w$, and $w=c-b\in G_{[d_1+1,+\infty[}[p]$.

Now, the inductive procedure for the proof of $n\Rightarrow n+1$ is straightforward. 
We will only sketch the case $n=2$, as it explains well the general case.

First, since $G$ is infinite,  for some $d_2\in \N$ (take the smallest possible one), we have
$$m(2):=\dim\pi_{[1,d_2]}(G[p])\neq \dim\pi_{[1,d_2]}(\langle B_1\rangle).$$
Furthermore, since $G$ is order controllable, it follows
$$\pi_{[1,d_2]}(G[p])=\pi_{[1,d_2]}( \langle B_1\rangle\bigoplus G_{[d_1+1,+\infty[}[p])=
\pi_{[1,d_2]}( \langle B_1\rangle\bigoplus G_{[d_1+1,n_{d_2}]}[p]).$$

Now, we proceed as in the case $n=1$ in order to obtain a subset $$B_2=\{x_{m(1)+1},\cdots x_{m(2)}\}\subseteq G_{[d_1+1,n_{d_2}]}[p]$$
satisfying the assertions (a),...,(d) and (f) stated above. On the other hand, assertion (e) follows from Lemma \ref{lem altura}.
This completes the inductive argument.
\medskip

Next, we prove the following 

\textbf{CLAIM:} $$G\bigcap(\bigoplus G_i)\subseteq\overline{\sum\limits_{m\geq 1}\langle y_m\rangle}=G.$$\medskip

\emph{Proof of the Claim:}

First, remark that for each $x\in B_k$, we have $\hbox{order}(x)=p$, $supp(x)\subseteq [d_{k-1}+1,n_{d_k}]$, and $\pi_{[d_{k-1}+1,{d_k}]}(x)\neq 0$.
Furthermore, for each $x\in B_k$,
there exists $y\in G_{[1,n_{d_k}]}$ with $x=p^hy$, $\hbox{order}(y)=p^{h+1}$, where $h=h(x,G)=h(x,G_{[1,n_{d_k}]})$, and such that if $n_j<d_k$, for some $j$,
then $\pi_{[1,j]}(y)=0$ by Lemma \ref{lem altura}.

Set $$Y:=\sum\limits_{m\geq 1}\langle y_m\rangle.$$

We first prove that every element in $\sum\limits_{m\geq 1}\langle x_m\rangle$ has the same height in the group $G$
as in the subgroup $Y\subseteq G$.

Indeed, let $z$ be an arbitrary element in $\sum\limits_{m\geq 1}\langle x_m\rangle$. Then there is some index $k\in\N$ such that

$$z\in\langle B_1\cup\cdots B_k\rangle=\sum\limits_{1\leq m\leq m(k)}\langle x_m\rangle.$$

Set $$Y_k:=\sum\limits_{1\leq m\leq m(k)}\langle y_m\rangle,$$
since $Y_k\subseteq Y\subseteq G$, it is enough to verify that $z$
has the same height in the group $G$ (equivalently, in the subgroup $G_{[1,n_{d_k}]}$) as in the subgroup $Y_k$.

Assume for the moment that
\begin{equation}\label{equ2}
0\neq z=\lambda_{m(k-1)+1}x_{m(k-1)+1}+\cdots\lambda_{r}x_r\in \langle B_k\rangle,
\end{equation}
$0\leq\lambda_j<p$, $\lambda_r\neq 0$, $m(k-1)<j\leq r\leq m(k)$,
where the terms appearing in (\ref{equ2}) are displayed with decreasing height,
that is, in the same order as they are listed in $B_k$.
Thus $$h_j=h(x_j,G)\geq h(x_{j+1},G)=h_{j+1},$$ $m(k-1)<j\leq r\leq m(k).$ 
We also have $\pi_{[1,d_{k-1}]}(z)=0$ and $\pi_{[d_{k-1}+1,d_k]}(z)\neq 0$.

Set $$H_k:=\sum\limits_{m(k-1)< m\leq m(k)}\langle y_m\rangle.$$

Remark that, since the elements $x_j\in B_k$ are taken with decreasing height, it follows that
each $\lambda_jx_j\neq 0$ has the same height in $G$ as in $H_k$. Furthermore, the height of $z$ in $G$ is
\begin{equation}\label{equ3}
h:=h(z,G)=h(x_r,G)=h_r=\min\{h_j: \lambda_j\neq0, m(k-1)< j\leq r\}=h(z,H_k).
\end{equation}
Indeed, if we had $h>h_r$, then we would have selected $z$ (or another vector of the same height) in place of $x_r$
when defining $B_k$. Thus $h(z,G)=h(z,H_k)\leq h(z,Y_k)\leq h(z,G)$, and we are done in this case.

The general case is proved by induction. Assume that whenever $$0\neq z\in\langle B_i\cup\cdots \cup B_k\rangle,$$
where $i$ is the first index such that $\pi_{[1,d_i]}(z)\neq 0$, we have that $h(z,G)=h(z,Y_k)$.
\skp

Reasoning by induction, take an arbitrary element $0\neq z\in\langle B_{i-1}\cup\cdots B_k\rangle$,
where $i-1$ is the first index such that $\pi_{[1,d_{i-1}]}(z)\neq 0$.

Then $z=z_{i-1}+z_i+\cdots z_k$, $z_j\in\langle B_j\rangle$, $i-1\leq j\leq k$, where
$$\pi_{[1,d_{i-1}]}(z_{i-1})=\pi_{[1,d_{i-1}]}(z)$$ and,
from the argument in the paragraph above, the height of $z_j$ in $H_j$ is the same as in $G$, $i-1\leq j\leq k$.

If $h(z_{i-1},G)<h(z_{i}+\cdots z_k,G)$, then
$$h(z,Y_k)\leq h(z,G)=h(z_{i-1},G)=h(z_{i-1},H_{i-1}) \leq h(z_{i-1},Y_{k})\leq h(z_{i-1},G)$$ by (\ref{equ3}).
On the other hand, by the inductive hypothesis, we have
$$h(z_{i}+\cdots z_k,G)=h(z_{i}+\cdots z_k,Y_k).$$

Hence $$h(z_{i-1},Y_k) = h(z_{i-1},G)<h(z_{i}+\cdots z_k,G)=h(z_{i}+\cdots z_k,Y_k),$$
which yields $$h(z,Y_k)= h(z_{i-1},Y_k)=h(z_{i-1},G)=h(z,G).$$
This completes the proof when $h(z_{i-1},G)<h(z_{i}+\cdots z_k,G)$. The case
$h(z_{i}+\cdots z_k,G)<h(z_{i-1},G)$ is analogous.

Therefore, we may assume
without loss of generality that $$h(z_{i-1},G)=h=h(z_{i}+\cdots z_k,G).$$ Moreover, by the inductive hypothesis, we also have
$$h(z_{i-1},Y_k)=h(z_{i-1},G)=h=h(z_{i}+\cdots z_k,G)=h(z_{i}+\cdots z_k,Y_k).$$

Reasoning by contradiction, suppose that $$h(z,G)=r>h(z,Y_k)\geq h.$$ Since $G$ is order controllable we can decompose
$$z=p^{r}y=p^{r}v_{i-1}+p^rw_{i-1},$$
where $$y\in G_{[1,n_{d_k}]},\ v_{i-1}\in G_{[1,n_{d_{i-1}}]},\ w_{i-1}\in G_{[d_{i-1}+1,n_{d_k}]},$$
$$\hbox{order}(p^ry)=\hbox{order}(p^rv_{i-1})=p,$$ $$\pi_{[1,d_{i-2}]}(p^ry)=\pi_{[1,d_{i-2}]}(p^rv_{i-1})=0,$$ and
$$\pi_{[d_{i-2}+1,d_{i-1}]}(p^ry)=\pi_{[d_{i-2}+1,d_{i-1}]}(p^rv_{i-1})=\pi_{[d_{i-2}+1,d_{i-1}]}(z)=\pi_{[d_{i-2}+1,d_{i-1}]}(z_{i-1})\neq 0.$$
Let $\lambda_lx_l$ be the last term in the sum of $z_{i-1}$, then the height of $x_l$ in $G$ coincides with
the height of $z_{i-1}$ in $G$, which is $h$ by (\ref{equ3}). Furthermore,
$p^rv_{i-1}\in G_{[d_{i-2}+1,n_{d_{i-1}}]}[p]$ and $$\pi_{[d_{i-2}+1,d_{i-1}]}(p^rv_{i-1})\notin\pi_{[d_{i-2}+1,d_{i-1}]}(\langle x_{m(i-2)+1},\cdots x_{l-1}\rangle).$$
This is a contradiction with the previous choice of $x_l$ because the height of $p^rv_{i-1}$ in $G$ is $r>h$ and $x_l$ was selected with maximal possible height in $G$.
Therefore, we have proved $h(z,G)=h=h(z,Y_k)=h(z,Y)$.

We now prove that for every $z\in \hbox{tor}(G)$ (the torsion subgroup of $G$) 
there is a sequence $$(\lambda_m)\in\prod\limits_{m\in\N}\Z(p^{h_m+1})$$ 
such that
\begin{equation}\label{equ4}
z=\lim\limits_{k\rightarrow\infty} \sum\limits_{m=1}^{m(k)} \lambda_my_m
\end{equation}
which is tantamount to
$$\pi_{[1,d_k]}(z)=\pi_{[1,d_k]}\left(\sum\limits_{m=1}^{m(k)} \lambda_my_m\right)$$
for every $k\in\N$.

We proceed by induction on the order $p^s$ of $z$.

Take any element $z\in G[p]$. 
By Equation (\ref{equ1}), we know that
$$\pi_{[1,d_k]}(G[p])=\pi_{[1,d_k]}(\langle B_1\rangle\bigoplus\cdots\langle B_k\rangle)$$
holds for all $k\in\N.$
Therefore, there is a sequence $$(\alpha_m)\in\prod\limits_{m=1}^\infty \Z(p)$$ such that
$$\pi_{[1,d_k]}(z)=\pi_{[1,d_k]}\left(\sum\limits_{m=1}^{m(k)} \alpha_mx_m\right)=\pi_{[1,d_k]}\left(\sum\limits_{m=1}^{m(k)} \alpha_mp^{h_m}y_m\right)$$
for all $k\in\N.$
This means
$$z=\lim\limits_{k\rightarrow\infty} \sum\limits_{m=1}^{m(k)} \alpha_mp^{h_m}y_m.$$
This completes the proof for $s=1$ if we set $\lambda_m:=\alpha_mp^{h_m}$ for all $m\in\N$.

Now, suppose that the assertion is true when order$(z)\leq p^s$ and pick an arbitrary element
$z\in G$ 
with order$(z)=p^{s+1}$. 
Then $p^{s}z$ has order $p$ and therefore belongs to $G[p]$. 
By Equation (\ref{equ1}) again, we know that
there is a sequence
$$(\alpha_m)\in\prod\limits_{m=1}^\infty \Z(p)$$ such that
$$\pi_{[1,d_k]}(p^sz)=\pi_{[1,d_k]}\left(\sum\limits_{m=1}^{m(k)} \alpha_mx_m\right)=\pi_{[1,d_k]}\left(\sum\limits_{m=1}^{m(k)} \alpha_mp^{h_m}y_m\right)$$
for all $k\in\N$.

Now, since we have chosen each element $x_m$ with the maximal possible height, it follows that $s\leq h_m$ for all $1\leq m\leq m(k)$,
and $k\in\N$. Therefore
$$\pi_{[1,d_k]}(p^{s}z)=\pi_{[1,d_k]}\left(p^{s}\sum\limits_{m=1}^{m(k)} \alpha_mp^{h_m-s}y_m\right),$$
for all $k\in\N$, which yields
$$\pi_{[1,d_k]}\left(p^{s}(z -\sum\limits_{m=1}^{m(k)} \alpha_mp^{h_m-s}y_m)\right)=0$$
for all $k\in\N$.

Set $$v=\lim\limits_{k\rightarrow\infty}\sum\limits_{m=1}^{m(k)} \alpha_mp^{h_m-s}y_m\in G$$
where the limit exists, and therefore $v$ is well defined, because $y_m(i)=0$ for all $m>m(n_i)$.
Then we have $z=v+(z-v)$, where order$(z-v)\leq p^s$. By the inductive hypothesis, there is a sequence $$(\mu_m)\in\prod_{m\in\N}\Z(p^{h_m+1})$$ such that
$$z-v=\lim\limits_{k\rightarrow\infty} \sum\limits_{m=1}^{m(k)} \mu_my_m.$$
Therefore
$$z=\lim\limits_{k\rightarrow\infty} \sum\limits_{m=1}^{m(k)} (\alpha_mp^{h_m-s}+\mu_m)y_m.$$
This completes the proof of the inductive argument. Therefore, it is proved that

$$G\bigcap(\bigoplus G_i)\subseteq \hbox{tor}(G)\subseteq\overline{\sum\limits_{m\geq 1}\langle y_m\rangle}.$$
Since $G$ is closed and order controllable, it follows
$$\overline{G\bigcap(\bigoplus G_i)}=G=\overline{\sum\limits_{m\geq 1}\langle y_m\rangle}.$$
This completes the proof of the Claim.
\mkp

We now proceed with the proof of the three assertions formulated in this theorem.\mkp

\noindent (i) We will now prove that $G$ is topologically 
generated by the set $\{y_m : m\in\N\}$.

First,
observe that the finite subgroup $\langle y_m\rangle$, generated by $y_m$ in $G$,
is isomorphic to $\mathbb{Z}(p^{h_m+1})$ for every $m\geq1$.
Thus, without loss of generality, we may replace the group $\langle y_m\rangle$ by $\mathbb{Z}(p^{h_m+1})$ in the sequel.
Consider now the group $\prod\limits_{m\in \N} \mathbb{Z}(p^{h_m+1})$, equipped
with the product topology and its dense subgroup $\bigoplus\limits_{m\in \N} \mathbb{Z}(p^{h_m+1})$.
Set
$$\Phi: \bigoplus\limits_{m\in\N} \mathbb{Z}(p^{h_m+1})\longrightarrow G\bigcap(\bigoplus G_i)\leq G$$
defined by
$$\Phi[(k_1,\dots,k_m,\dots)]= \sum\limits_{m=1}^\infty k_my_m.$$ Since only
finitely many $k_m$ are non-null, the map $\Phi$ is clearly well defined.
We will prove that $\Phi$ is also a topological group isomorphism on its image.

In order to verify that $\Phi$ is one-to-one, suppose there is a sequence
$$(k_1,\cdots,k_r,0,\cdots)\in\ker f,\ 0\leq k_j<p^{h_j+1},$$  with some $k_j\neq 0$. Then we have

$$k_1y_1+\cdots+k_ry_r=0.$$

Expressing every $k_j\neq0$ in base $p$, we obtain $k_j=a_{h_j}^{(j)}p^{h_j}+\cdots+a_1^{(j)}p+a_0^{(j)}$,
$0\leq a_i^{(j)}<p$, $0\leq i\leq h_j$, $1\leq j\leq r$. Let $p^{s_j}$ the minimal power of $p$ that
appears in the expression of $k_j\neq 0$. Since $y_j$ has order $p^{h_j+1}$ the order of $k_jy_j$ is $p^{h_j-s_j+1}$.

Defining $d:=\max\{h_j-s_j: k_j\neq0, 1\leq j\leq r\}$ and multiplying by $p^d$ the equality above, we obtain
an expression as follows
$$p^d\left((a_{h_{i_1}}^{(i_1)}p^{h_{i_1}}+\cdots+a_{h_{i_1-d}}^{(i_1)}p^{h_{i_1}-d})y_{i_1}+\cdots
(a_{h_{i_l}}^{(i_l)}p^{h_{i_l}}+\cdots+a_{h_{i_l-s}}^{(i_l)}p^{h_{i_l}-d})y_{i_l}\right)=0,$$
where we have only considered those elements $\{y_{i_j}\}_{j=1}^l$ such that
$h_{i_1}-s_{i_1}=\cdots h_{i_l}-s_{i_l}=d$.  Since $p^{h_{i_j}}y_{i_j}=x_{i_j}$ has order $p$,
we have $$a_{s_{i_1}}^{(i_1)}x_{i_1}+\cdots a_{s_{i_l}}^{(i_l)}x_{i_l}=0.$$
Since the elements $\{x_{i_1},\cdots,x_{i_l}\}$ are all independents, it follows that
$$a_{s_{i_1}}^{(i_1)}=\cdots a_{s_{i_l}}^{(i_l)}=0.$$ This is a contradiction which completes the proof.
Therefore $\Phi$ is 1-to-1.

The sequence $(y_m)$ that we have defined above verifies that $y_m(i)=0$ for all $m> m(n_{i})$. As a consequence,
we have that $\lim\limits_{m\rightarrow \infty} y_m(i)=0$
for all $i\in \N$, which implies the continuity of $\Phi$. Indeed,
let $(z_\alpha)$ be a sequence in $\bigoplus\limits_{m\in\N} \mathbb{Z}(p^{h_m+1})$
converging to $0$. If $V_i=(0,\cdots 0)\times\prod\limits_{j>i}G_j$
is an arbitrary basic \nbd of  $0$ in $\prod\limits_{j\in\mathbb{N}}G_j$, since $(0,\cdots 0)\times\prod\limits_{j>m(n_i)}\mathbb{Z}(p^{h_j+1})$ is a \nbd of $0$
in $\prod\limits_{j\in \mathbb{N}}\mathbb{Z}(p^{h_j+1})$, then there is $\alpha_i$
such that ${z_\alpha}_{|[1,m(n_i)]}=0$ for all $\alpha\geq\alpha_i$. Therefore
$z_{\alpha}=(0,\cdots,0,k_{{m(n_i)+1},\alpha},\cdots)$ and
$\Phi(z_\alpha)=\sum\limits_{m>m(n_i)}k_{m,\alpha}y_m\in V_i$, for all
$\alpha\geq\alpha_i$ and for all $i\in \mathbb{N}$.
Thus, the sequence $(\Phi(z_\alpha))$ converges to $\Phi(0)=0$,
which verifies the continuity of $\Phi$.

As a consequence, there is a continuous extension
$$\Phi: \prod\limits_{m\in\N} \mathbb{Z}(p^{h_m+1})\longrightarrow G$$
that we still denote by $\Phi$ for short, which is continuous and onto. Furthermore, it is easily seen that it holds
$$\Phi[(k_m)]= \sum\limits_{m=1}^\infty k_my_m.$$
Remark that, since $y_m(i)=0$ for all $m> m(n_{i})$, it follows that
$$\sum\limits_{m=1}^\infty k_my_m(i)$$ reduces to a finite sum for all $i\in\N$.
Therefore $\Phi$ is well defined. This proves that $\{ y_m : m\in\N\}$ is a generating set for $G$.
\mkp

\noindent (ii) Next we prove that if $G$ has finite exponent then $\Phi$ is $1$-to-$1$ on $\prod\limits_{m\in\N} \mathbb{Z}(p^{h_m+1})$ and,
as a consequence, that $\Phi$ is an isomorphic encoder and $G$ is weakly rectangular.

For that purpose, it will suffice to check that $\ker \Phi =\{0\}$.

We proceed by induction on the order $p^s$ of the elements $\mathbf v:=(\lambda_m)\in\ker \Phi$.

Suppose order$(\vv)=p$, which means $\lambda_m=\alpha_mp^{h_m}$, $0\leq \alpha_m<p$, for all $m\in\N$. We have
$$\Phi(\vv)=\sum\limits_{m=1}^{\infty} \alpha_mp^{h_m}y_m=\sum\limits_{m=1}^{\infty} \alpha_mx_m=0.$$
For every $l\in\N$, set $\mathbf v_l:=(\mu_m)$, where $\mu_m=\lambda_m$ if $1\leq m\leq m(l)$
and $\mu_m=0$ if $m>m(l)$.  It follows that $\lim\limits_{l\rightarrow\infty}\vv_l=\vv$.
By the continuity of $\Phi$ we obtain
$$\lim\limits_{l\rightarrow\infty}\sum\limits_{m=1}^{m(l)}\alpha_mx_m=\sum\limits_{m=1}^{\infty} \alpha_mx_m=0.$$
Thus, for every $k\in\N$, there is $l_k\in\N$ such that
$$\pi_{[1,d_k]}(\Phi(\vv_l))=\pi_{[1,d_k]}\left(\sum\limits_{m=1}^{m(l)}\alpha_mx_m\right)=
\sum\limits_{m=1}^{m(l)}\alpha_m\pi_{[1,d_k]}(x_m)=0$$
for all $l\geq l_k$. On the other hand
$$\Phi(\vv_l)=\sum\limits_{m=1}^{m(1)}\alpha_mx_m+\sum_{\underbrace{m=m(1)+1}_{\pi_{[1,d_1]}(x_m)=0}}^{m(2)}\alpha_mx_m+\cdots
\sum_{\underbrace{m=m(k)+1}_{\pi_{[1,d_k]}(x_m)=0}}^{m(l)}\alpha_mx_m,$$
where $0\leq \alpha_m<p$, then for $l\geq l_k$, we have
$$\pi_{[1,d_1]}(\Phi(\vv_l))=\sum\limits_{m=1}^{m(1)}\alpha_m\pi_{[1,d_1]}(x_m)=0$$ and since
$\pi_{[1,d_1]}(B_1)=\{\pi_{[1,d_1]}(x_1),\cdots, \pi_{[1,d_1]}(x_{m(1))}\}$ is a basis for
$\pi_{[1,d_1]}(G[p])$ we obtain that
$\alpha_1=\cdots=\alpha_{m(1)}=0$.

In like manner, from $$\pi_{[1,d_2]}(\Phi(\vv_l))=\sum\limits_{m=m(1)+1}^{m(2)}\alpha_m\pi_{[1,d_2]}(x_m)=0,$$
we deduce that $\alpha_{m(1)+1}=\cdots=\alpha_{m(2)}=0$. Therefore, iterating this argument, we obtain
$\alpha_{1}=\cdots=\alpha_{m(k)}=0$.
Since $\lambda_m=\alpha_mp^{h_m}$ for all $1\leq m\leq m(l)$, it follows that $\lambda_m=0$ for all $1\leq m\leq m(k)$.
Since this holds for every $k\in\N$, it follows that $\lambda_m=0$ for all $m\in\N$. This completes the proof for $s=1$.

Now, suppose that the assertion is true when order$(\vv)\leq p^s$ and pick an arbitrary element
$\vv=(\lambda_m)\in \ker \Phi$ such that order$(\vv)=p^{s+1}$.
Then $p^{s}\vv\in\ker \Phi$ and has order $p$. Therefore, the arguments above applied to $p^{s}\vv$ yields that $p^s\vv =0$,
which is a contradiction. 
By the inductive assumption, it follows that $\vv =0$, which completes the proof.

Therefore, we have proved that
$$\Phi: \prod\limits_{m\in\N} \mathbb{Z}(p^{h_m+1})\longrightarrow G$$
is $1$-to-$1$. The compactness of the domain implies that $\Phi$ is a topological group isomorphism onto $G$.\mkp

\noindent (iii) Assume that  $\sum\limits_{m\in \mathbb{N}} \langle x_m\rangle$ is weakly observable.
This means
$$\overline{\sum\limits_{m\in \mathbb{N}} \langle x_m\rangle}\cap \bigoplus G_i = \sum\limits_{m\in \mathbb{N}} \langle x_m\rangle.$$

We have to verify that the map $$\Phi : \bigoplus\limits_{m\in\N} \mathbb{Z}(p^{h_m+1})\longrightarrow G\cap \bigoplus G_i$$
is onto. Reasoning by contradiction, suppose there is an element
$$z\in G\cap \bigoplus G_i\setminus \sum\limits_{m\in \mathbb{N}} \langle y_m\rangle,$$
which has the smallest possible order, $p^{s+1}, s\geq 0$, of an
element with this property. Since, by the foregoing Claim, we have that
$$G[p]\cap \bigoplus G_i\subseteq  \overline{\sum\limits_{m\in \mathbb{N}} \langle x_m\rangle},$$
it follows that $$G[p]\cap \bigoplus G_i = \sum\limits_{m\in \mathbb{N}} \langle x_m\rangle.$$
Therefore, since $p^sz\in G[p]\cap \bigoplus G_i$, there must be a finite subset $J\subseteq \N$ such that
$$p^sz=\sum\limits_{m\in J} \ga_mx_m = \sum\limits_{m\in J} \ga_mp^{h_m}y_m,\ 0 < \ga_m <p.$$
We have already proved that $s\leq h_m$ for all $m\in J$, which yields
$$p^sz= p^s\sum\limits_{m\in J} \ga_mp^{h_m-s}y_m.$$
Set $$v:= \sum\limits_{m\in J} \ga_mp^{h_m-s}y_m = f((\ga_mp^{h_m-s})).$$ Then $p^s(z-v)=0$.

If $s=0$, we obtain $z=v$ and we are done.

So, assume that $s>0$. In this case, we have an element $z-v\in G\cap \bigoplus G_i$, whose order is $p^s$.
By our initial assumption, this means that $z-v=f((\gl_m))$ for some element $(\gl_m)\in \bigoplus\limits_{m\in\N} \mathbb{Z}(p^{h_m+1}).$
Therefore $$z=v+f((\gl_m)) = f((\ga_mp^{h_m-s}+\gl_m)),$$ which is a contradiction. This completes the proof.
\end{proof}

\bexm
Let $G\subseteq\prod\limits_{i\geq 1} G_i$, where $G_i=\mathds{Z}(2^{2})$, be the subgroup generated by
the set $\{y_n : n\in\N\}$, where
$y_1\in G_{[1,2]}$ with $y_1(1)=2$ and $y_1(2)=1$, and
$y_n\in G_{[n,n+1]}$ with $y_n(n)=y_n(n+1)=1$ for $n>1$.

The group $G$ is not order controllable. Indeed, for any block $[1,m]$, pick
$y\in G$ such that $y(n)=2$ for all $1\leq n\leq m+1$, which
only admits the sum $y=z_m+z$ with the first part $z_m\in G_{[1,m+1]}$, where
$z_m(n)=y(n)=2$, $1\leq n\leq m$ and $z_m(m+1)=1$, $m\geq 1$. Then $order(y_{|[1,m+1]})=2$ but $order(z_m)=4$.

On the other hand, it is easily seen that  $\overline G^{\prod G_i}$ is an implicit direct product
of the family $\{ G_i : i\in \N\}$. Therefore, the choice of an appropriate generating set is essential
in order to determine whether a subgroup of a product is weakly rectangular or an implicit direct product.
\eexm

\section{Main Result}


Let $G$ be a closed subgroup of $X=\prod\limits_{i\in\mathbb{N}}G_i$ (a countable product of finite abelian groups).
Since each group $G_i$ is finite and abelian, by the fundamental structure theo\-rem of finite abelian groups, we have that
every group $G_i$ is a  finite sum of finite $p$-groups, that is $G_i\cong\bigoplus\limits_{p\in\mathbb{P}_i}(G_i)_p$ and
$\mathbb{P}_i=\mathbb{P}_{G_i}$ is finite, $i\in\mathbb{N}$. Note that $\mathbb{P}_X=\cup\mathbb{P}_i$.
We have $$\prod\limits_{i\in\mathbb{N}}G_i\cong\prod\limits_{i\in\mathbb{N}}(\prod\limits_{p\in\mathbb{P}_i}(G_i)_p)\cong\prod
\limits_{p\in\mathbb{P}_X}(\prod\limits_{i\in\mathbb{N}_p}(G_i)_p)$$
where $\mathbb{N}_p=\{i\in\mathbb{N}\,:\,G_i\hbox{ has a nontrivial}\ p-\hbox{subgroup}\}$.

\noindent Thus $$(X)_p\cong\prod\limits_{i\in\mathbb{N}_p}(G_i)_p.$$

\noindent Consider the embedding
$$j:G\hookrightarrow \prod\limits_{p\in\mathbb{P}_G}(\prod\limits_{i\in\mathbb{N}_p}(G_i)_p)$$ and the canonical projection
$$\pi_p:\prod\limits_{p\in\mathbb{P}_G}(\prod\limits_{i\in\mathbb{N}_p}(G_i)_p)\rightarrow\prod\limits_{i\in\mathbb{N}_p}(G_i)_p.$$

\noindent Set $G^{(p)}=(\pi_p\circ j)(G)$, that is a compact group.
We have
$$(G)_p\cong G^{(p)}.$$

Now, it is easily seen that if $G$ is order controllable then $(G)_p$ has this property
for each $p\in \mathbb{P}_G$. Taking this fact into account, we obtain the following result that answers to Question \ref{Question}
for products of finite abelian groups.\mkp

We can now prove Theorem {\bf A}.

\begin{proof}[\textbf{Proof of Theorem \ref{theorem_A}}].
Since $G\cap(\bigoplus\limits_{i\in\mathbb{N}}G_i)$ is dense in $G$,
we have that
$$(\pi_p\circ j)(G\cap(\bigoplus\limits_{i\in\mathbb{N}}G_i))=G^{(p)}\cap\bigoplus\limits_{i\in \mathbb{N}_p}(G_i)_p$$ is dense in $G^{(p)}$.
Thus (a) is a direct consequence of Theorem \ref{th_products_p}. That is, for each $p\in \mathbb{P}_G$, there is a sequence
 $$\{y_m^{(p)}\,:\,m\in\mathbb{N}\}\subseteq G^{(p)}\cap\bigoplus\limits_{i\in \mathbb{N}_p}(G_i)_p$$
 such that $\{y_{m}^{(p)}\,:\,m\in\mathbb{N}\}$ is a generating set for
 $G^{(p)}$. Furthermore, observe that if $p\in\mathbb{P}_G$, then $G^{(p)}\cap\bigoplus\limits_{i\in \mathbb{N}_p}(G_i)_p\cong (G\cap(\bigoplus\limits_{i\in \mathbb{N}}G_i))_p$. Thus, using this isomorphism, we may assume with some notational abuse that
$$\{y_m^{(p)}\,:\,m\in\mathbb{N}\}\subseteq (G\cap(\bigoplus\limits_{i\in \mathbb{N}}G_i))_p$$
 Therefore, the sequence
 $$\{y_m^{(p)}\,:\,m\in\mathbb{N},p\in\mathbb{P}_G,p\in\mathbb{P}_G\}\subseteq G\cap(\bigoplus\limits_{i\in\mathbb{N}}G_i)$$
 is a generating set for $G$.

In order to prove (b), we apply Theorem \ref{th_products_p} again and, since $G$ has finite exponent,
for each $p\in \mathbb{P}_G$, we have
that $G^{(p)}\cong\prod\limits_{m\in\mathbb{N}}\langle y_m^{(p)}\rangle$, which yields (b).

Finally, If $\bigoplus\limits_{m\in\mathbb{N}} \langle y_m^{(p)}\rangle[p]$ is weakly observable for each $p\in \mathbb{P}_G$,
then $G^{(p)}$ is an implicit direct product for every $p\in \mathbb{P}_G$, which again implies that
$G$ is an implicit direct product.
\end{proof}

\bqtn
Under what conditions is it possible to extend Theorem A to non-Abelian groups?
\eqtn
\mkp

\textbf{Acknowledgment:} The authors thank Dmitri Shakahmatov for several helpful comments.

\end{document}